\begin{document}
	
	
	%
	%
	
	\title{Cayley automatic representations of 
		   wreath products}
	
	\author{Dmitry Berdinsky$^*$ \and Bakhadyr Khoussainov$^\dagger$}
	
	\address{Department of Computer, The University of Auckland \\
		     Private Bag 92019, Auckland, 1142, New Zealand\\
		\email{$^*$berdinsky@gmail.com} 
		\email{$^\dagger$bmk@cs.auckland.ac.nz}}
		
 	\maketitle
	
	
	\begin{abstract}
		We construct the representations of Cayley graphs of wreath 
		products using finite automata, pushdown automata and 
		nested stack automata. These representations are  
		in accordance with the notion of Cayley automatic groups
		introduced by Kharlampovich, Khoussainov and Miasnikov and
		its extensions introduced by Elder and Taback. 
		We obtain the upper and lower bounds for a length of
		an element of a wreath product in terms of the representations
		constructed.

	\end{abstract}
	
	\keywords{Finite automata; pushdown automata; nested stack automata;
		      Cayley graphs; wreath products.}
	
	\section{Introduction}	
	        
		 In this paper we study representations 
		 of Cayley graphs of wreath products 
		 of groups using finite automata, 
		 pushdown automata and nested stack automata. 
		 The representations considered in this paper are 
		 related to the notion of Cayley automatic
		 groups that was introduced  in \cite{KKM11}.  		 		 		 
		 The notion of Cayley automatic groups was 
		 introduced as a natural generalization 
		 of automatic groups in the sense of Thurston \cite{Epsteinbook}.

		 The set of Cayley automatic groups properly contains 
		 the set of automatic groups. 
		 In addition, the set of Cayley automatic groups includes finitely generated nilpotent groups of class at most two, the lamplighter group \cite{KKM11}, and the Baumslag--Solitar groups \cite{dlt14}. Cayley automatic groups retain some nice properties 
		 of automatic groups. They are closed under direct product, free product and finite extensions. The word problem in 
		 Cayley automatic groups is decidable in quadratic time.

		 We assume that the reader is familiar with the notions of finite
		 automaton and regular language.
		 Let $\Sigma$ be a finite alphabet.   
		 Put $\Sigma_\diamond = \Sigma \cup \{\diamond \}$, where 
		 $\diamond \notin \Sigma$. The convolution of 
		 two words $w_1, w_2 \in \Sigma^*$ is the string 
		 $w_1 \otimes w_2$		      
		 of length $\max\{|w_1|,|w_2|\}$ over the alphabet 
		 $\Sigma_\diamond \times \Sigma_\diamond$ defined as follows. 
		 The $k$th symbol
		 of the string is $(\sigma_1,\sigma_2 )$, 
		 where $\sigma_i$, $i=1,2$ is the $k$th symbol of 
		 $w_i$ if $k \leqslant |w_i|$ and
		 $\diamond$ otherwise. 
		 The convolution $\otimes R$  of a binary relation 
		 $R \subset \Sigma^* \times \Sigma^*$ is
		 $\otimes R = \{w_1 \otimes w_2 | (w_1, w_2) \in R\}$. 		
		 We say that a binary relation 
		 $R \subset \Sigma^* \times \Sigma^*$ is automatic if 
		 there exists a finite automaton in the alphabet 
		 $\Sigma_\diamond \times \Sigma_\diamond$ that accepts 
		 $\otimes R$. Such an automaton is called
		 two--tape synchronous finite automaton.

		 Let $G$ be a group generated by a  subset $S \subset G $. 
		 Consider the labeled and directed Cayley graph $\Gamma (G,S)$.
		 We say that $G$ is  Cayley automatic 
		 if some regular language $L\subset\Sigma^*$ over some alphabet $\Sigma$ uniquely represents elements of $G$ such that
		 for every  $s \in S$ the binary relation which is the set of directed edges of $\Gamma (G,S)$ labeled by $s$ is automatic. 
		 That is to say, the group $G$ is Cayley automatic if the labeled digraph $\Gamma (G,S)$ is an automatic structure in terms of \cite{KhoussainovNerode95}.

		 In \cite{ElderTabackCgraph}, Elder and Taback considered the extensions of the notion of Cayley automatic groups replacing the regular languages by more powerful languages.      
		 We denote by  $\mathcal{C}$  a class of languages; for example, it can be the class of regular languages, context--free languages or context--sensitive languages.    
		 The notion of Cayley automatic groups can be extended as follows. 
		 \begin{definition}	
		 \label{defCCayleyautomatic}   
		    Let $G$ be a group. We say that 
		    $G$ is $\mathcal{C}$ Cayley automatic if there exists 
		    a  subset $S \subset G$ generating $G$ for which the following properties hold: 
		    \begin{itemize}       
		 	   \item{There exists a bijection $\psi: L \rightarrow G$      
		 	   	     between a language $L \subset \Sigma^*$ 
		 	   	     from the class 
		 	   	     $\mathcal{C}$ and the group $G$;}
	           \item{For each $h \in S$ the language 
	           	$L_h = \{ w_1 \otimes w_2 | w_1, w_2 \in L, 
	           	\psi(w_1) h = \psi(w_2)\}$ is in the class 
	           	$\mathcal{C}$.}	 	
		 	\end{itemize}	
		 \end{definition}	
		 
		 In this paper all groups $G$ are finitely generated and generating sets $S$ are finite.
		
		 If $\mathcal{C}$ is the class of regular 
		 languages then $\mathcal{C}$ Cayley automatic 
		 groups are Cayley automatic groups.
		 The notion of Cayley automatic groups 
		 is invariant under the choice of generators.
		 Recall that the context--free and indexed languages are the ones 
		 that are recognizable by the pushdown automata and nested stack automata, respectively. 
		 Notice that for context--free and indexed 
		 Cayley automatic groups Definition \ref{defCCayleyautomatic} depends on the choice of generators.  
		 In terms of \cite{ElderTabackCgraph},
		 the group $G$, the subset $S$
		 and the finite alphabet $\Sigma$ in Definition \ref{defCCayleyautomatic} form a $\mathcal{C}$--graph 
		 automatic triple. 
		 
		 The known results on $\mathcal{C}$ Cayley automatic 
		 representations of groups are as follows. 
		 In \cite{KKM11}, Kharlampovich, Khoussainov and 
		 Miasnikov constructed the Cayley automatic 
		 representations for finitely generated nilpotent 
		 groups of class at most two.   
		 In \cite{dlt14}, we constructed the Cayley automatic 
		 representations for all Baumslag--Solitar groups. 
		 In \cite{ElderTabackCgraph}, Elder and Taback constructed
		 the deterministic non--blind 2--counter Cayley automatic
		 representation for the countably generated free group.
		 In \cite{ElderTabackF15}, they constructed 
		 the deterministic non--blind 
		 $1$--counter Cayley automatic representation 
		 for Thompson's group $F$.

		 In this paper we consider the cases when $\mathcal{C}$ 
		 is the class of regular languages, context--free languages 
		 and indexed languages. 		 
		 We assume that the reader is 
		 familiar with the notion of pushdown automata and 
		 context--free languages, and also nested stack automata 
		 and indexed languages. 
		 For the corresponding definitions we refer to 
		 \cite{Gilman05,Gilman98,HopcroftUllman}.

		 In this paper we  construct the Cayley 
		 automatic representation for 
		 wreath products $G \wr \mathbb{Z}$, 
		 the context--free Cayley automatic representation 
		 for wreath products $G \wr F_n$ and 
		 the indexed Cayley automatic representation for 
		 the wreath product $\mathbb{Z}_2 \wr \mathbb{Z}^2$.   
		 In each case we specify the set of generators for wreath products
		 with respect to which we consider their Cayley graphs. 
	     For the representations constructed we prove the inequalities of the form: 
		 \begin{equation}
		 \label{quasigeodesicourdefinition}    
		     \lambda |w| + \mu \leqslant |g|  \leqslant  \xi |w| + \delta, 	 
		 \end{equation}
		 where $|g|$ is the length of a group element $g$ 
		 with respect to chosen set of generators 
		 and $|w|$ is the length of the word $w$   
		 which is the representative of $g$, i.e., 
		 $\psi (w) = g$.

 		 In \cite{baumslagshapiro}, Baumslag, Shapiro and Short 
 		 introduced the notion of parallel poly--pushdown groups. 
 		 The definition of poly--pushdown groups uses 
 		 two--tape asynchronous automata instead of 
 		 synchronous ones used in Definition \ref{defCCayleyautomatic}.
 		 They have shown that the set of poly--pushdown groups is 
 		 closed under wreath product. 
 		 This implies that all wreath products  
 		 considered in this paper are parallel poly--pushdown groups.

 		 For wreath products there is an abundance of results on quantitative characteristics 
 		 such as growth rate \cite{Parry92}, 
 		 isoperimetric profiles \cite{Erschlerisoperimetric03} 
 		 and drift of simple random walks 
 		 \cite{dyubinaasymptotics2004,Dybina99Z2}.
 		 This makes studying representations of Cayley graphs of wreath products relevant to seeking connections between characteristics
 		 of groups and the computational power of automata that
 		 are sufficient to represent their Cayley graphs.
 		 In this paper we focus on the aforementioned classes of languages,
 		 i.e., regular, context--free and indexed.

         The  paper is organized as follows. 
         In Section \ref{preliminaries} we briefly recall 
         the definitions and notation for wreath products of groups. 
         In Section \ref{GwrZsection} we construct 
         the Cayley automatic representations for    
         wreath products $G \wr \mathbb{Z}$ and show 
         the inequalities of the form 
         \eqref{quasigeodesicourdefinition} for them. 
         In Section \ref{GwrFnsection} we construct  
         the context--free Cayley automatic representation for    
         wreath products $G \wr F_n$ and show 
         the inequalities of the form 
         \eqref{quasigeodesicourdefinition} for them. 
         In Section \ref{Z2wrZ2section} we construct 
         the indexed Cayley automatic representation for    
         the wreath product $\mathbb{Z}_2 \wr \mathbb{Z}^2$.
       		   
   \section{Wreath products of groups: definitions and notation}
   \label{preliminaries}	   
   	     
	     Recall the definition of the restricted wreath product 
	     $A \mathrel{\wr} B$. For more details on wreath products
	     see, e.g., \cite{KargapolovMerzljakov}.
         Given two groups $A$ and $B$,
         we denote by $A^{(B)}$ the set of all functions $B \rightarrow A$
         having finite supports.
         Recall that a function $f: B \rightarrow A$ 
         has finite support if $f (x) \neq e$
         for only finite number of $x \in B$, where $e$
         is the identity of $A$.
         Given $f \in A^{(B)}$ and $b \in B$, we define $f^b \in A^{(B)}$ as follows. Put $f^b (x) = f(bx)$ for all $x \in B$.
         The group $A \mathrel{\wr} B$ is the set product $B \times A^{(B)}$ with the group multiplication given by 
         $ (b, f)  \cdot (b', f') =   (b b' , f^{b'} f')$.

    For our purposes we use the converse order for representing the elements of a wreath product.
    Namely,  we represent an element of
    $A \mathrel{\wr} B$ as a pair $( f,   b )$, where $f \in A^{(B)}$ and $b \in B$.
    For such a representation, the group multiplication is given by
    $( f, b )  \cdot ( f', b' )    =  ( f f'^{\, b^{-1}},  b b'  )$.
    
    There exist natural embeddings $B \rightarrow A \mathrel{\wr} B$ and
    $A^{(B)} \rightarrow A \mathrel{\wr} B$ mapping $b$ to $ ({\bf e}, b )$
    and $f$ to $ (f, e)$ respectively, where
    ${\bf e}$ is the identity of $A^{(B)}$ and $e$ is the identity of $B$.
    For the sake of simplicity, we will identify  $B$ and $A^{(B)}$
    with the corresponding subgroups of $A \mathrel{\wr} B$.
    
    Recall that, according to \cite{baumslag61},
    the wreath product $A \wr B$ of two finitely presented 
    groups $A$ and $B$ is finitely presented iff either 
    $A$ is the trivial group or $B$ is finite. 
    Therefore, the wreath products 
    $G \wr \mathbb{Z}$, $G \wr F_n$ and $\mathbb{Z}_2 \wr \mathbb{Z}^2$
    considered here are not finitely presented. In particular, 
    these groups are not automatic \cite{Epsteinbook}.

    \section{The wreath products of groups with the  infinite cyclic group}
    \label{GwrZsection}

          We denote by $a$ the generator of 
          $\mathbb{Z} = \langle a \rangle$, 
          and by $h$ the nontrivial element of $\mathbb{Z}_2$. 
          We consider the Cayley graph of the lamplighter group 
          $ \mathbb{Z}_2 \wr \mathbb{Z}$ with respect to the generators 
          $a$ and $h$. 
          The automatic presentations for the Cayley graph of $\mathbb{Z}_2 \wr \mathbb{Z}$
          with respect to the generators $a$ and $h$ 
          were constructed in \cite{KKM11} and \cite{dlt14}.  
          In Theorem \ref{propL2cayleyautomatic} below we modify the  Cayley automatic representation used in~\cite{dlt14} (see Theorem~4).    
          This will enable us to get a simple proof of the inequalities
          of the form \eqref{quasigeodesicourdefinition}. 
         \begin{theorem}
         	\label{propL2cayleyautomatic}   
         	There exists a Cayley automatic representation
         	$\psi: L \rightarrow \mathbb{Z}_2 \wr \mathbb{Z}$ of the lamplighter group    	
         	 such that for every 
         	$g \in \mathbb{Z}_2 \wr \mathbb{Z}$ and its representative $w=\psi^{-1}(g)$ the 
         	inequalities 
         	$|w| - 1 \leqslant  |g| \leqslant 3 |w| - 2$    
         	hold.  
         \end{theorem}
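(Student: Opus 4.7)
My plan is to adapt the Cayley automatic representation of \cite{dlt14} so that the underlying interval always contains both the origin and the current lamplighter position; the length inequalities then fall out of a short comparison between the word length and the standard geometric formula for the lamplighter word metric.

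First, for each $g = (f, n) \in \mathbb{Z}_2 \wr \mathbb{Z}$ I would set $\ell = \min(\mathrm{supp}(f) \cup \{0, n\})$ and $r = \max(\mathrm{supp}(f) \cup \{0, n\})$, so that $\ell \leqslant 0$, $\ell \leqslant n$, $r \geqslant 0$, $r \geqslant n$. The representing word $w = w_\ell w_{\ell+1} \cdots w_r$ has its $i$th letter drawn from an alphabet that records the lamp state $f(i) \in \{0,1\}$ together with a marker indicating whether cell $i$ is the origin, the lamplighter position, both, or neither. Uniqueness of representatives is forced by requiring the first and last letters of every $w \in L$ to be either lit or marked. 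The resulting language $L$ is regular, and by construction $|w| = r - \ell + 1$.

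Next I would check that $\psi \colon L \to \mathbb{Z}_2 \wr \mathbb{Z}$ is Cayley automatic, which is a minor adjustment of the argument in Theorem~4 of \cite{dlt14}. Right multiplication by $h$ flips the lamp bit of the letter carrying the lamplighter marker and leaves $|w|$ unchanged, so the corresponding binary relation is visibly recognised by a two-tape synchronous automaton. Right multiplication by $a$ shifts the lamplighter marker one cell to the right; this is a local edit in the interior, and at the boundary it either appends one unlit marked cell on the right (when $n = r$) or trims one unlit unmarked cell from the left (when the old lamplighter position was the sole reason for including $\ell$). Both effects are of finite type, so the relation remains synchronously automatic.

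Finally I would prove the length bounds using the standard word-metric description of $\mathbb{Z}_2 \wr \mathbb{Z}$: for $g = (f, n)$ one has $|g| = |\mathrm{supp}(f)| + T$, where $T$ is the length of the shortest walk on $\mathbb{Z}$ starting at $0$, visiting every lit cell, and ending at $n$. Because such a walk must traverse the whole of $[\ell, r]$, comparing the two monotone-turn strategies immediately gives $T = 2(r - \ell) - |n|$. Substituting $|\mathrm{supp}(f)| \leqslant r - \ell + 1 = |w|$ and $|n| \geqslant 0$ yields $|g| \leqslant |w| + 2(|w| - 1) = 3|w| - 2$, and substituting $|\mathrm{supp}(f)| \geqslant 0$ and $|n| \leqslant r - \ell = |w| - 1$ yields $|g| \geqslant 2(r - \ell) - (r - \ell) = |w| - 1$. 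I expect the main obstacle to be the boundary bookkeeping in the $a$-step, specifically the trimming case, where the leftmost cell disappears after incrementing $n$ and the synchronous alignment of the two tapes has to absorb a length shift of one; apart from this, both the representation and the length estimates are direct.
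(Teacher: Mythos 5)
Your construction of the representation is essentially the one in the paper: the word for $g=(f,n)$ is the labelled interval $[\ell,r]$ with $\ell=\min(\mathrm{supp}(f)\cup\{0,n\})$, $r=\max(\mathrm{supp}(f)\cup\{0,n\})$, markers for the origin and the lamplighter, and the same local-edit argument (including the one-position shift in the left-trimming case for the $a$-edge) for synchronous automaticity. Where you genuinely diverge is the proof of $|w|-1\leqslant|g|\leqslant 3|w|-2$. The paper takes the Cleary--Taback closed-form word-length formula obtained from the left-first/right-first normal forms, rewrites it in terms of $m$, $n$, $z$, and proves the upper bound by a three-case analysis, while the lower bound is proved separately by induction over the Cayley graph (the identity has a length-one representative and each generator changes the representative's length by at most one). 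You instead use the equivalent ``travelling salesman on $\mathbb{Z}$'' description $|g|=\#\mathrm{supp}\,f+T$, note that the walk must cover all of $[\ell,r]$ so that comparing the two monotone strategies (and the matching lower bound for any walk visiting both endpoints) gives $T=2(r-\ell)-|n|$, and then both inequalities fall out of $0\leqslant\#\mathrm{supp}\,f\leqslant|w|$ and $0\leqslant|n|\leqslant|w|-1$ together with $|w|=r-\ell+1$. This is shorter, handles both bounds uniformly, and makes transparent where the constants $1$ and $3$ come from; the only thing you should add is a citation to (or a short proof of) the formula $|g|=\#\mathrm{supp}\,f+T$, which is precisely the content of the result from \cite{Taback03} that the paper invokes in its own, less geometric, form.
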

         \begin{proof}
         	Recall that 
         	an element of $\mathbb{Z}_2 \wr \mathbb{Z}$ is a pair
         	$(f,z)$, where $f$ is a function 
         	$f: \mathbb{Z} \rightarrow \mathbb{Z}_2$ that has finite 
         	support and $z \in \mathbb{Z}$ is the position of the 
         	lamplighter.     	    	        	         	
         	In order to present  
         	$f(i) \in \mathbb{Z}_2$ we use the symbols $0$ and $1$: $0$ means
         	that a lamp in the position $z=i$ is unlit, i.e., $f(i)=e$; $1$ means that the lamp is lit, i.e., $f(i)=h$. 
         	By abuse of notation we will write $f(i)=0$ instead of
         	$f(i)=e$ and $f(i)=1$ instead of $f(i)=h$.         	
         	Recall that only 
         	a finite number of lamps are lit for each element of the group $\mathbb{Z}_2 \wr \mathbb{Z}$. 
         	To show the position of the origin $0 \in \mathbb{Z}$ we use
         	the symbols $A_0$ and $A_1$ if the lamp in the origin 
         	is unlit and lit respectively.    
         	To show the position of the lamplighter  
         	we use the symbols $C_0$ and $C_1$ if the lamp in 
         	the position of the lamplighter is unlit and lit respectively.  
         	In the case the lamplighter is at the origin  we use the symbols $B_0$ and $B_1$.  
         	
         	Given an element $(f,z) \in \mathbb{Z}_2 \wr \mathbb{Z}$, let $m$ be the 
         	smallest $i \in \mathbb{Z}$ such that 
         	$f(i) = 1$; if $f(i)= 0$ for all $i \in \mathbb{Z}$, 
         	then put $m  =0$. 
         	Put $\ell = \min \{m, z, 0 \}$.
         	Let $n$ be the largest 
         	$j$ such that $f(j) =1$; if $f(j)=0$ for all 
         	$j \in \mathbb{Z}$, then put $n = 0$.
         	Put $r = \max \{ n, z, 0\}$.   
         	Let us represent $(f,z)$ as follows: 
         	\begin{equation}
         	\label{code1L2}     
         	f(\ell)  f(\ell+1)  \dots f(-1)A_{f(0)} f(1) \dots
         	f(z-1) C_{f(z)} f(z+1) \dots f(r-1)f(r),
         	\end{equation}
         	here $z>0$ is assumed; if $z<0$, then $C_{f(z)}$ will appear on the left of $A_{f(0)}$.        	
         	In the case $z=0$ the word representing an element $(f,z)$ is 
         	\begin{equation} 
         	\label{code2L2}   
         	f(\ell) f(\ell+1)\dots f(-1) B_{f(0)} f(1) \dots f(r-1)f(r).
         	\end{equation}   
         	It can be observed that the language of the words representing all elements $(f,z)$ of the group $\mathbb{Z}_2 \wr \mathbb{Z}$ is regular.  
         	Let an element $g = (f,z)$ be represented by a word \eqref{code1L2}. Writing the words representing $g$ and $ga = (f,z+1)$   
         	one under another we have 
         	\begin{equation}
         	\label{rel1L2} 
         	\begin{array}{ccccccccccc}  
         	f(\ell) & \dots  & A_{f(0)}  & \dots & f (z-1) & C_{f(z)} & f(z+1) & \dots & f(r)  \\
         	f(\ell)  & \dots  & A_{f(0)}  & \dots & f (z-1) & f(z) & C_{f(z+1)} & \dots  & f(r)
         	\end{array}.
         	\end{equation}     
         	The other cases are considered similarly. 
         	From \eqref{rel1L2} 
         	it is clear that
         	the relation 
         	$\{ \langle g, ga \rangle | g \in \mathbb{Z}_2 \wr \mathbb{Z} \}$ is recognized by 
         	a synchronous two--tape finite automaton.            	         
         	Writing the words representing $g$ and $gh$ one under 
         	another we have
         	\begin{equation}
         	\label{rel3L2} 
         	\begin{array}{ccccccccccc}  
         	f(\ell) & \dots  & A_{f(0)}  & \dots & f (z-1) & C_{f(z)} & f(z+1) & \dots & f(r)  \\
         	f(\ell)  & \dots  & A_{f(0)}  & \dots & f (z-1) & C_{\overline{f(z)}} & f(z+1) & \dots  & f(r)
         	\end{array},
         	\end{equation}
         	where  $\overline{f(z)} = 1 - f(z)$.  
         	The other cases are considered similarly.            	
         	From \eqref{rel3L2} 
         	it is clear that
         	the relation 
         	$\{\langle g, gh \rangle | g \in 
         	\mathbb{Z}_2 \wr \mathbb{Z} \}$ is recognized by 
         	a synchronous two--tape finite automaton.

        Let us prove the inequalities $|w| - 1 \leqslant  |g| \leqslant 3 |w| - 2$.
        For the automatic representation constructed 
        the length of a word $w$
        representing an element $g= (f,z) $ is     		
        	\begin{equation}
        	\label{dlina1L2}
        	\begin{split}   
        	|w| =  |r - \ell | + 1 = 
        	|\max \{ n,z,0 \} - \min \{ m,z,0 \} | + 1 = \\
        	 \max \{ |n - m |, |n|, |m|, |n  - z|, |m - z|, |z| \} + 1. 
        	\end{split}   
        	\end{equation}
        For a given $f: \mathbb{Z} \rightarrow \mathbb{Z}_2$, we denote by 
        $\# \mathrm{supp} \, f $  the cardinality of the 
        set $\mathrm{supp} \, f =  \{ j \, | \, f(j) = h \}$.           
        	First we show that the word 
        	length of
        	$g $ with respect to 
        	the generators $a$ and $h$ is  
        	\begin{equation}
        	\label{lengthformula1L2}
        	\begin{split}
        	|g| =  \# \mathrm{supp} \, f  +   
        	\mathrm{min} \{ 2 \max\{-m,0\} + \max\{n,0\} + |z - \max\{n,0\}|, \\ 
        	2 \max\{n,0\} + \max \{-m, 0\} + |z + \max \{-m, 0\}| \}. 
        	\end{split}
        	\end{equation} 
          	By \cite{Taback03}, the left--first and the right--first normal forms of $g$ are 
        	\begin{equation*}
        	a_{i_1} \dots a_{i_p} a_{-j_1} \dots a_{-j_q} a^z,
        	\end{equation*}     
        	\begin{equation*}
        	a_{-j_1} \dots a_{-j_q} a_{i_1} \dots a_{i_p} a^z,  
        	\end{equation*}
        	where $i_p = n$ (if $n \geqslant 0$), $j_q =  -m$ (if $m \leqslant -1 $), 
        	$i_p > \dots > i_1 \geqslant 0$, $j_q > \dots > j_1 > 0$ and 
        	$a_{k} = a^k h a^{-k}$.  
        	It is proved \cite[Proposition~3.6]{Taback03}  that 
        	the word length of $g$ with respect to 
        	the generators $a$ and $h$ is
        	\begin{equation*}   
        	\label{length1L2}  
        	|g| =  p + q + \min\{2j_q + i_p + |z - i_p|, 2i_p + j_q + |z + j_q|\}. 
        	\end{equation*} 
        	Let us express $|g|$ in terms of $m \leqslant n$ for the three different cases:
        	\begin{itemize}
        		\item{ $m\leqslant -1$ and $n \geqslant 0$: 
        			$|g| = p + q + \min\{ -2m + n + |z - n|, 
        			2 n -  m + |z - m| \} $,}	 
        		\item{ $m \geqslant 0$: 
        			$ |g| = p + q +  n + |z - n| $, }
        		
        		\item{$n \leqslant -1$: 
        			$|g| = p + q -m + |z-m| $.}
        	\end{itemize} 	 
        	It can be seen that $ \# \mathrm{supp} \, f  = p + q$. 
        	Therefore, we obtain \eqref{lengthformula1L2}.

        	Let us prove the inequality $|g| \leqslant 3 |w| -2$. 
        	By \eqref{dlina1L2}, $|w| \geqslant n - m + 1$. 
        	Therefore, $|w| \geqslant \# \mathrm{supp}\,f$.    
        	Consider each of the three cases: $m \leqslant -1 <0 \leqslant n$, 
        	$n \leqslant -1$ and $0 \leqslant m $ separately.
        	\begin{itemize} 
        		\item{ The case $m \leqslant -1 <  0 \leqslant n$. 
        			If $z \geqslant n$,  
        			then we have: 
        			$-2m + n + |z-n| = -2m + z 
        			\leqslant 2(z - m)$. 
        			If $z \leqslant m$, then we have: 
        			$2n - m + |z - m| = 2n - m + m - z \leqslant 2 (n - z)$. 
        			If $m < z < n$, then we have: 
        			$-2m + n + |z- n|= 2(n-m) -z $ and 
        			$ 2n - m + |z -m|= 2(n-m) +z$. 
        			Therefore, by \eqref{dlina1L2}:  $\min\{ -2m + n + |z - n|, 2n - m + |z-m| \} 
        			\leqslant 2 (|w| - 1)$.   
        			Therefore, $|g| \leqslant 3 |w| - 2$.
        		}
        		\item{ The case $m \geqslant 0$. By \eqref{dlina1L2} we have: 
        			$ n  + |z - n| \leqslant 2(|w|-1)$.  
        			Thus, $|g| \leqslant  3 |w| -2$. 
        		}        
        		\item{ The case $n \leqslant -1$. By \eqref{dlina1L2} we have: 
        			$-m + |z - m| \leqslant 2 (|w| - 1)$.    
        			Therefore, $|g| \leqslant 3|w| -2$. 	
        		} 
        	\end{itemize}

        	Let us prove the inequality $|w| - 1 \leqslant |g|$. 
        	The identity $e \in  \mathbb{Z}_2 \wr \mathbb{Z}$ is represented by the word $B_0$. 
        	Therefore, the inequality holds for $g = e$. 
        	Suppose that the inequality holds for some $g \in \mathbb{Z}_2 \wr \mathbb{Z}$.  
        	For the Cayley automatic representation constructed the length of the word representing $gh$
        	equals $|w|$, and the lengths of the words representing $ga$
        	and $ga^{-1}$ are equal to either $|w|$, $|w|+1$ or $|w|-1$. 
        	This implies that the inequality holds for the elements $gh$, $ga$ and $ga^{-1}$. Thus, it holds for all $g \in \mathbb{Z}_2 \wr \mathbb{Z}$. 
        	It can be verified that both bounds $|w| - 1 \leqslant  |g| \leqslant 3 |w| - 2$    are  reached for an infinite number of elements $g \in \mathbb{Z}_2 \wr \mathbb{Z}$.              	                   
         \end{proof}
        
         Let $G$ be a Cayley  automatic group and         
         $\psi_G: L_G \rightarrow G$ be a Cayley automatic representation of $G$. We assume that the empty word $\varepsilon \notin L_G$.              	 
         Let $\{g_1, \dots , g_n\} \subset G$ be a finite set generating $G$.  
         We consider the Cayley graph of $G \wr \mathbb{Z}$ 
         with respect to the generators $g_1,\dots, g_n$ and $a$. 
          Put $d_j, j= 1,\dots,n$ to be the maximum number of the padding symbols $\diamond$ in the
          convolutions 
          $\psi_G ^{-1}(g) \otimes \psi_G ^{-1} (g g_j) $. 
          Put $K_0 = |\psi_G^{-1} (e)|$, where $e \in G$ is the 
          identity. 
          Put 
           $K = \max \{K_0, d_j \, | \, j \in [1,n]\}$.                 
         Theorem \ref{propL2cayleyautomatic} can be generalized, using essentially the same technique, to the following result.
     
         \begin{theorem}
         	\label{propGZautomatic} 	       	   
            There exists a Cayley automatic representation 
            $\psi: L \rightarrow G \wr \mathbb{Z}$ of the group $G \wr \mathbb{Z}$ such that for every 
            $g \in G \wr \mathbb{Z}$ and its representative $w = \psi^{-1}(g)$ the inequality 
            $\frac{1}{K} |w| - \frac{K_0}{K} \leqslant |g|$ holds.  
            Suppose that the inequality 
            $|g| \leqslant C |\psi_G^{-1}(g) |+ D$ 
            holds for all $g \in G$, 
            where $C>0$ and $D \geqslant 0$. 
            Then  for every 
            $g \in G \wr \mathbb{Z}$ and its representative 
            $w$ the  inequality
            $|g| \leqslant  (C + D + 2) |w| - 2$
             holds. 
         \end{theorem}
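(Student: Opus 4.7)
The plan is to extend the construction of Theorem \ref{propL2cayleyautomatic} by replacing each single-symbol lamp slot, which records a value in $\mathbb{Z}_2$, by a variable-length block that records a value in $G$. Given $(f,z) \in G \wr \mathbb{Z}$, I set $\ell = \min\{m,z,0\}$ and $r = \max\{n,z,0\}$ with $m, n$ the extreme non-trivial positions of $f$, and take the codeword of $(f,z)$ to be the concatenation $\psi_G^{-1}(f(\ell)) \psi_G^{-1}(f(\ell+1)) \cdots \psi_G^{-1}(f(r))$. To recover block boundaries without spending a separator symbol, the last character of every block is rewritten in a disjoint marked copy of $\Sigma_G$ (the end marker); three further disjoint copies of $\Sigma_G$ supply the markers $A$, $C$, $B$ attached to the first character of the block at position $0$, at position $z$, or at the unique block when $0=z$. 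The identity of $G \wr \mathbb{Z}$ then becomes a $B$-and-end-marked copy of $\psi_G^{-1}(e)$, of length $K_0$, and the full language of admissible codewords is regular since $L_G$ is.

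I would then verify Cayley automaticity by checking the convolution relation for each generator. Right multiplication by $a^{\pm 1}$ flips the $C$ marker between the blocks at positions $z$ and $z\pm 1$ and, when the range extends, appends or prepends a single block $\psi_G^{-1}(e)$ at one end of the word; the unchanged portion is either identical or displaced by exactly $K_0$ symbols, and a synchronous automaton handles the displacement via a shift buffer of size $K_0$. Right multiplication by $g_j$ replaces only the block at position $z$, changing $\psi_G^{-1}(f(z))$ to $\psi_G^{-1}(f(z)g_j)$; the prefix of blocks aligns exactly, the altered block is recognised by the Cayley automatic structure of $G$, and the unchanged suffix is displaced by at most $d_j \leqslant K$ symbols, again handled by a bounded shift buffer.

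For the lower bound, I would induct on $|g|$ as in the proof of Theorem \ref{propL2cayleyautomatic}: the identity gives $|w|=K_0$, and each generator application increases $|w|$ by at most $K$ (at most $d_j$ for $g_j$, at most $K_0$ for $a^{\pm 1}$, from the case analysis above), which gives $|w| \leqslant K|g| + K_0$. For the upper bound I would write $g$ explicitly by a Taback-style itinerary: starting at the origin, the lamplighter walks to the rightmost non-trivial position multiplying by $f(i)$ at each stop, returns through the origin, walks to the leftmost non-trivial position doing the same, then moves to $z$ (or symmetrically, left-first, choosing the cheaper one). This costs at most $2(r-\ell)$ uses of $a^{\pm 1}$, and since each block has length at least one, $r-\ell+1 \leqslant |w|$, which bounds the lamplighter moves by $2|w|-2$. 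The $G$-multiplications cost at most $\sum_i |f(i)|_G \leqslant C\sum_i |\psi_G^{-1}(f(i))| + D|\mathrm{supp}\, f| \leqslant (C+D)|w|$ by the hypothesis. Summing yields $|g| \leqslant (C+D+2)|w| - 2$.

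The main technical obstacle is the verification of synchronous automaticity for the $g_j$-relation: because the block at position $z$ changes length by up to $d_j$ symbols, the suffix of unchanged blocks is displaced in the convolution, so the two encodings do not match character-by-character even though they encode identical content there; recognising this displacement synchronously, while still honouring the end markers that delimit the subsequent blocks, is the non-routine part, and is carried out with the bounded shift buffer described above.
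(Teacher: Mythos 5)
Your proposal is correct and follows essentially the same route as the paper: encode $(f,z)$ as the concatenation of the $\psi_G$-codes of $f(\ell),\dots,f(r)$ with specially marked letters delimiting the blocks and flagging the origin and lamplighter positions, verify the generator relations with synchronous automata that absorb the bounded length discrepancies, and derive the lower bound by induction on $|g|$ from the identity (of code length $K_0$) with increments of at most $K$ per generator, and the upper bound from an explicit left-/right-first itinerary. The only differences are cosmetic (you mark the last letter of each block where the paper underlines the first) and expository: you spell out the shift-buffer verification and the itinerary estimate that the paper leaves implicit by appealing to the technique of Theorem~\ref{propL2cayleyautomatic}.
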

         \begin{proof}             	         	
         	For simplicity we may always assume that $L_G \subset \{0,1\}^*$ \cite{Blumensath99}.   	
         	We introduce 
         	two counterparts of the symbols $0$ and $1$:  $\underline{0}$ and 
         	 $\underline{1}$, respectively, which specify the beginning of a word. 
         	In order to specify the position of the origin $z = 0$ 
         	we use the symbols $A_0$ and $A_1$ depending on whether the word 
         	that represents the element of $G$ at $z=0$ has $0$ and $1$ as
         	the first letter. Similarly, we use the symbols $C_0$ and $C_1$ 
         	to specify the position of the lamplighter $z$. 
         	The symbols $B_0$ and $B_1$ are used if $z=0$.  
         	
         	Let us show two simple examples. Take an element $(f,1) \in G \wr \mathbb{Z}$   such that $f(j)=e$ for $j \notin [-1,2]$ and $f (-1) \neq e$, $f(0)$, $f(1)$ and  $f(2) \neq e$ are represented by the words $011$, $1001$, $01$ and $111$ respectively. Then  the element 
         	$(f,1)$ is represented by the word 
         	$
         	\underline{0}11A_1 001 C_0 1 \underline{1}11.      
         	$        
         	Take an element $(f,0) \in G \wr \mathbb{Z}$ such that $f(j)=e$ 
         	for $j \notin [-1,1]$ and $f(-1) \neq e$, $f(0)$ and $f(1) \neq e$ are 
         	represented by the words $111$, $000$ and $01$ respectively. 
         	Then the element $(f,0)$ is represented 
         	by the word 
         	$
         	\underline{1}11 B_0 00 \underline{0}1.
         	$  
         	
         	By abuse of notation, we denote by $f(j)$ the word  representing the group element $f(j) \in G$ for which the first letter $\sigma$ is changed to the underlined one $\underline{\sigma}$. 
         	We denote by $A_{f(0)}$, $B_{f(0)}$ and $C_{f(z)}$ the corresponding words for which the first letter 
         	$\sigma$ is changed to $A_\sigma$, $B_\sigma$ and 
         	$C_\sigma$, respectively.

         	Let  $(f,z) \in G \wr \mathbb{Z}$.          	
         	The numbers $\ell$ and $r$ have the same meaning as in the proof of Theorem \ref{propL2cayleyautomatic}. Similar to \eqref{code1L2} and 
         	\eqref{code2L2} an element $(f,z)$ is represented by the word        	
         	$f(\ell) \dots A_{f(0)} \dots C_{f(z)} \dots f (r)$
         	and an element $(f,0)$ is represented by the word
         	$f(\ell) \dots B_{f(0)} \dots f(r)$.
         	It is clear that the relation 
         	$\{\langle g,ga\rangle\,|\,g \in G\wr\mathbb{Z}\}$
         	is recognizable by a two--tape synchronous finite automaton.
         	Since $G$ is Cayley automatic, 
         	for every $j =1,\dots,n$ the relation $\{\langle g, g g_j  \rangle \, | \, g \in G \wr \mathbb{Z} \}$
         	is recognizable by a two--tape synchronous finite automaton.
         	The inequalities  $\frac{1}{K} |w| - \frac{K_0}{K} \leqslant |g|$ and $|g| \leqslant  (C + D + 2) |w| - 2$
         	can be obtained using the same technique as in 
         	Theorem \ref{propL2cayleyautomatic}.
         	
      \end{proof}  
     
      \begin{remark}
      	 Suppose that the representation $\psi_G : L_G \rightarrow G$
      	 is Cayley biautomatic (see \cite{KKM11} for the definition of Cayley biautomatic
      	 groups.). It can be verified that then  
      	 the representation 
      	 $\psi : L \rightarrow G \wr \mathbb{Z}$ constructed in Theorem \ref{propGZautomatic} is Cayley biautomatic.  	  
      \end{remark}

    \section{The wreath products of groups with a free group}
    \label{GwrFnsection}
          
          We denote by $a$ and $b$ the generators of the free group $F_2= \langle a, b \rangle$, 
          and by $h$ the nontrivial element of $\mathbb{Z}_2$. 
          We consider the Cayley graph of the            
          wreath product $\mathbb{Z}_2 \wr F_2$ with 
          respect to the generators $a,b$ and $h$. 
          Recall that an element of $\mathbb{Z}_2 \wr F_2$ 
          is a pair $(f,z)$, where $f$ is a function $f: F_2 \rightarrow \mathbb{Z}_2$ that has 
          finite support and $z \in F_2$ is the position of the lamplighter.
          We have the following theorem.               
         \begin{theorem}
         	\label{Z2wrF2automatic}   
         	There exists a context--free Cayley automatic representation 
         	$\psi : L \rightarrow \mathbb{Z}_2 \wr F_2$ 
         	of the group $\mathbb{Z}_2 \wr F_2$
         	such that for every $g \in \mathbb{Z}_2 \wr F_2$
         	and its representative $w = \psi^{-1} (g)$ the 
         	inequalities 
         	$
         	\frac{1}{3} |w| - \frac{1}{3}  \leqslant 
         	|g| \leqslant  3 |w| - 2
         	$  
         	hold.	   
         \end{theorem}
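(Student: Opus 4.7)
The plan is to mimic the proof of Theorem~\ref{propL2cayleyautomatic} by replacing the linear doubly--infinite tape of lamp positions with a depth--first traversal of the finite subtree of the Cayley graph of $F_2$ spanned by the origin, the lamplighter position and the support of $f$. For $g=(f,z)\in\mathbb{Z}_2\wr F_2$ let $T(f,z)$ denote the smallest subtree of the Cayley graph of $F_2$ containing $\{e,z\}\cup\mathrm{supp}\,f$. I fix once and for all a total ordering of the four directions $a,b,a^{-1},b^{-1}$ and perform a depth--first traversal of $T(f,z)$ starting at $e$: the word $\psi^{-1}(g)$ records, in the order of the traversal, one lamp symbol $0$ or $1$ for each visited vertex, using the markers $A_0,A_1$ for the origin, $C_0,C_1$ for the lamplighter, and $B_0,B_1$ in the case $z=e$, in direct analogy with \eqref{code1L2} and \eqref{code2L2}; for each tree edge the traversal emits a direction symbol going down and a matching inverse symbol going back up. The resulting language $L$ is a Dyck--like language interleaving lamp and marker symbols with matching bracket pairs, hence context--free.

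The relation $L_h$ is in fact regular, since multiplication by $h$ toggles only the lamp symbol carried by the $C$- or $B$-marker. For a generator $s\in\{a,b,a^{-1},b^{-1}\}$, multiplication by $s$ moves the lamplighter from $z$ to $zs$ and falls into one of three local cases: $zs$ is already in $T(f,z)$, in which case only the $C$-marker is relocated along an existing matched bracket pair; $zs$ lies one edge beyond the current leaf at $z$, in which case a new matched bracket pair enclosing a fresh lamp symbol is inserted near the old $C$-marker; or $z$ becomes a redundant leaf after the move and the bracket pair surrounding the old $C$-marker must be removed. The main obstacle is showing that these local rewrites can be verified by a pushdown automaton on the convolution $w_1\otimes w_2$: the encodings of $g$ and $gs$ can differ by several symbols when the shape of the minimal subtree changes, and the automaton must simultaneously check symbol--by--symbol agreement of the unchanged prefix and suffix and correct bracket nesting across the edited region. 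A careful case analysis, together with the fact that every edit occurs in a predictable location adjacent to the $C$-marker, allows a single pushdown stack to carry out both tasks.

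For the length inequalities let $|E|$ and $|V|=|E|+1$ denote the edge and vertex counts of $T(f,z)$ and $s=\#\mathrm{supp}\,f$; by construction $|w|=|V|+2|E|=3|E|+1$. For the lower bound, every edge of $T(f,z)$ lies on the path in $F_2$ from $e$ to some vertex of $\mathrm{supp}\,f\cup\{z\}$, so every word representing $g$ must move the lamplighter across each such edge at least once, giving $|g|\geqslant|E|=\frac{1}{3}(|w|-1)$. For the upper bound, an explicit DFS walk of $T(f,z)$ from $e$ ending at $z$ and toggling the $s$ lit lamps in passing realises $g$ with length at most $s+2|E|\leqslant 3|E|+1=|w|\leqslant 3|w|-2$, which is in fact much stronger than the bound stated in the theorem.
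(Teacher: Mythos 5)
Your construction is correct in its essentials, but it is a genuinely different route from the paper's. The paper encodes $(f,z)$ by nested, alternating scans of entire horizontal ($a$-) and vertical ($b$-) lines of $\mathrm{Cay}(F_2)$, using two bracket types $(\,)$, $[\,]$ together with $D$-- and $E$--symbols, so that the representation restricts on the subgroup $H\cong\mathbb{Z}_2\wr\mathbb{Z}$ to the regular representation of Theorem~\ref{propL2cayleyautomatic}; you instead serialize, by depth--first search, the minimal subtree $T(f,z)$ spanning $\{e,z\}\cup\mathrm{supp}\,f$, with direction-labelled bracket pairs for the edges. The payoff is on the length estimates, where your arguments are cleaner and sharper: the paper proves $|w|\leqslant 3|g|+1$ by induction on $|g|$ (right multiplication by a generator changes the representative length by at most $3$) and proves $|g|\leqslant 3|w|-2$ by induction on the bracket nesting, reducing to $H$; you get the lower bound directly from the fact that any word for $g$ must drive the lamplighter across every edge of $T(f,z)$, and the upper bound from an explicit DFS walk, giving $|g|\leqslant \#\mathrm{supp}\,f+2|E|\leqslant |w|$, which is strictly stronger than $3|w|-2$. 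Your constants are also independent of the rank, so the same argument would improve the $\frac{1}{2n-1}$ in Theorem~\ref{Z2wrFnautomatic} to $\frac{1}{3}$. The price is paid in the automaton constructions, and there are two points to tighten. First, $L_h$ is not regular as you claim: since $u\otimes v\mapsto u$ maps $L_h$ onto $L$ and $L$ is a nontrivially nested bracket language, regularity of $L_h$ would force regularity of $L$; it is, however, (deterministic) context--free, which is all the theorem needs. Second, the context--freeness of $L_a$ and $L_b$ is the real crux, and your treatment of it is a sketch; you should at least record that the minimal subtree changes by exactly one leaf attached at $z$ (or none), so the edit is always a single balanced triple of symbols inserted or deleted adjacent to the old $C$--marker, that the new and old $C$--positions are separated only by complete bracketed subtrees (skippable with the one stack while a $3$--symbol buffer in the finite control handles the shifted suffix comparison), and that $za\notin T$ has $z$ as its unique neighbour in $T$. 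That said, the paper's own verification of this step is asserted at a comparable level of detail, so this is a matter of polish rather than a gap.
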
	
         \begin{proof}          		         	          	          	
           	In order to construct a context--free Cayley automatic representation of 
           	$\mathbb{Z}_2 \wr F_2$  
         	we extend the Cayley automatic representation of $\mathbb{Z}_2 \wr \mathbb{Z}$ obtained in Theorem \ref{propL2cayleyautomatic}.      
            In the Cayley automatic representation of 
            $\mathbb{Z}_2 \wr \mathbb{Z}$ we use the symbols 
            $0,1,A_0,A_1,B_0,B_1,C_0,C_1$.
            In the context--free Cayley automatic representation
            of $\mathbb{Z}_2 \wr F_2$ we use the brackets 
            $($, $)$ and $[$,  $]$.            
         	Along with the symbols $0$ and $1$ we use $D_0,E_0$ and $D_1,E_1$. Along with 
         	the symbols $A_0,A_1,B_0,B_1,C_0,C_1$ 
         	we use  $D_0 ^A, D_1 ^A, D_0 ^B, D_1 ^B, D_0 ^C,D_1 ^C,
         	E_0 ^C, E_1 ^C$.
         	The symbols $A_0,A_1,D_0 ^A, D_1 ^A$ are used to 
         	show the position of the origin $e\in F_2$. 
            The symbols $C_0,C_1,D_0 ^C, D_1 ^C, E_0 ^C, E_1 ^C$ are used to 
            show the position of the lamplighter $z\in F_2$.  	        	
         	The symbols $B_0, B_1, D_0 ^B, D_1 ^B$ 
         	are used if the lamplighter is at the origin. See also the
         	meaning of the symbols $A_0, A_1, B_0 , B_1, C_0, C_1$ in 
         	Theorem \ref{propL2cayleyautomatic}.


         	 We say that a symbol is an $A$--, $B$--, $C$--, $D$-- and $E$--symbol if it belongs to the set 
         	 $\{A_0,A_1, D_0 ^A, D_1 ^A \}$, 
         	 $\{B_0, B_1, D_0 ^B, D_1 ^B \}$, 
         	 $\{ C_0, C_1, D_0 ^C, D_1 ^C,E_0 ^C, E_1 ^C\}$,
         	 $\{D_0,D_1,D_0 ^A, D_1 ^A, D_0 ^B, D_1 ^B, D_0 ^C, D_1 ^C\}$
         	 and $\{E_0,E_1, E_0 ^C, E_1 ^C \}$,
         	 respectively. We say that a symbol is basic if it belongs 
         	 to the set $\{0,1,A_0,A_1,B_0,B_1,C_0,C_1\}$.  
                
            For a given $s \in F_2$, denote by 
            $r(s) \in \{a,a^{-1},b, b^{-1}\}^*$ the reduced word
            representing $s$. We denote by $F_a$ the set
            of all group elements $s \in F_2 $ for which  
            $r(s) = a w$ or $r(s) =a^{-1}w$, $w \in \{a,a^{-1},b, b^{-1}\}^*$. We denote by $F_b$ the set
            of all group elements $s \in F_2 $ for which  
            $r(s) = b w$ or $r(s) =b^{-1}w$, $w \in \{a,a^{-1},b, b^{-1}\}^*$. It is clear that $F_2 = F_a \cup F_b \cup \{e\}$.   
            We denote by $H$ the subgroup of $\mathbb{Z}_2 \wr F_2$ 
            generated by $a$ and $h$. It is clear that $H$ is isomorphic
            to $\mathbb{Z}_2 \wr \mathbb{Z}$.

            For a given $(f,z) \in \mathbb{Z}_2 \wr \mathbb{Z}$, depict it in a way shown Fig.~\ref{freegraph1}. 
            Let us consider the horizontal line going through the identity $e \in F_2$. For a vertex $s\in F_2$ on this line,
            put $V_s = \{p \in F_b \, | \,  f (s p) = h \vee z= sp \}$.  
            Scan this line from the left to the right. 
            If $V_s = \varnothing$, then write the corresponding
            basic symbol (see Theorem \ref{propL2cayleyautomatic}).
            If $V_s \neq \varnothing$, then write the corresponding  $D$--symbol. 
            For the element in Fig.~\ref{freegraph1} (left) we get 
            $11 D_0 ^A D_0 1$. 
            For the element in Fig.~\ref{freegraph1} (right) we get 
            $D_0 A_1 D_0 1$.          
            If $(f,z) \in H$, then we obtain  
            the same representative as in Theorem \ref{propL2cayleyautomatic}. If $(f,z) \notin H$, then
             $D$--symbols occur.             
             In this case we continue as follows. 
            
            Take any occurence of $D$--symbol.  
            This occurence corresponds to some vertex $s \in F_2$. Let us consider a vertical line going through
            $s$. For a vertex $t \in F_2$, $t \neq s$ on this line, 
            put $H_t = \{ p \in F_a \, | \, f(tp) = h  \vee z = tp\}$.                
            Insert the brackets $($ and $)$ around the occurence of a $D$--symbol.
            Scan, omitting $s$, this line from the bottom to the top. 
            If $H_t = \varnothing$, then write the corresponding 
            basic symbol inside the brackets. 
            If $H_t \neq \varnothing$, then write the corresponding 
            $E$--symbol inside the brackets.                         
            Do it for every occurence of a $D$--symbol. 
            For the element in Fig.~\ref{freegraph1} (left) we get 
            $11 (1E_0 D_0^A E_0) (E_0 D_0 E_1) 1 $. 
            For the element in Fig.~\ref{freegraph1} (right) we get 
            $(D_01)A_1(E_0D_0E_1 ^C)1$.
            If no $E$--symbols occur then we stop.  
            If $E$--symbols occur, we insert the brackets $[$ 
            and $]$ around each occurence and repeat the step above
            for horizontal lines. 
            
            We continue this procedure until no new $D$-- or 
            $E$--symbols occur. After the procedure is finished, 
            the result is the representative $w = \psi^{-1} (g)$ of 
            $g = (f,z)$.        		 	
         	    For the element in Fig.~\ref{freegraph1} (left) the procedure of  constructing the representative is 
         	    $11 D_0 ^A D_0 1 \rightarrow$ $11 (1E_0 D_0^A E_0) (E_0 D_0 E_1) 1 \rightarrow$$11 (1[1E_01] D_0 ^A [E_0 D_1]) 
         	    ([1E_0] D_0  [1E_1])1 \rightarrow$ $11 (1[1E_01] D_0 ^A [E_0 (C_1 D_1)])([1E_0] D_0 [1E_1])1$. 
         		For the element in Fig.~\ref{freegraph1} (right) it is 
         		$D_0 A_1 D_0 1 \rightarrow$ $(D_01)A_1(E_0D_0E_1 ^C)1\rightarrow$ $(D_01)A_1([1E_0]D_0[1E_1 ^C])1$.

         		\begin{figure}[b]
         			\centerline
         			{\includegraphics[height=5.6cm]{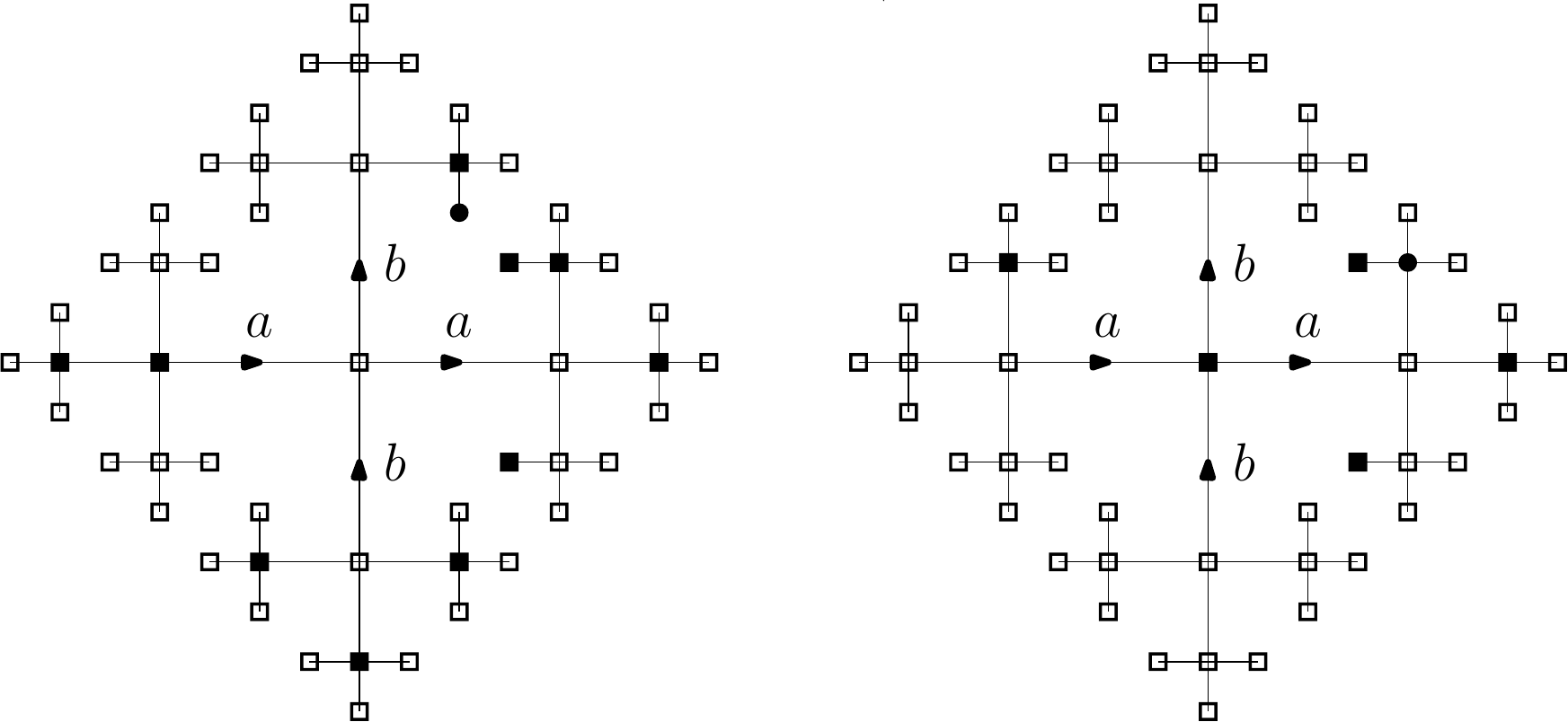}}
         			\vspace*{8pt}
         			\caption{A white box means that the value of a function
         				$f: F_2 \rightarrow \mathbb{Z}_2$ is $e \in \mathbb{Z}_2$, a black box means that it is $h \in \mathbb{Z}_2$,	a black disk specifies the position of the lamplighter $z$  and tells us that the value of $f$ is $h$. For the element to the left the word representing it is
         				$11(1[1E_01]D_0 ^A[E_0(C_1D_1)])([1E_0]D_0[1E_1])1$, 
         				for the element to the right it is 
         				$(D_01)A_1 ([1E_0]D_0[1E_1^C])1$.
         			}        		          	                     	 \label{freegraph1}
         		\end{figure}

           	Put $\Sigma = \{0,1,D_0,D_1,E_0,E_1, 
           	 (,),[,],A_0,A_1,B_0,B_1,C_0,C_1,D_0 ^A, D_1 ^A, D_0 ^B,D_1 ^B,$ $D_0 ^C,D_1 ^C,E_0 ^C$, $E_1 ^C\}$.	
         	Let $L \subset \Sigma^*$ be the language of representatives $w$ of all elements $g \in \mathbb{Z}_2 \wr F_2$.
            It can be seen that the language $L$ consists of the words 
            satisfying the following properties.
         		\begin{itemize}     
         			\item{The configuration of brackets $($, $)$, $[$, $]$ is balanced and, moreover, generated by the context--free grammar 
         				$S \to SS \, | \, (T) \, | \, \varepsilon, \,
         				T \to TT \, | \, [S] \,  | \, \varepsilon$
         				with the axiom $S$.}

         			\item{Each pair of matched brackets $($ and $)$  
         				is associated with a $D$--symbol 
         				which is placed inside these brackets, but not inside any other pair of matched brackets between them. That is, 
         			    the configuration of the subword between any 
         			    two matched brackets $($ and $)$ is 
         				$(p[\dots]q \dots r[\dots]s \,\sigma \,t[\dots]u\dots v[\dots]w)$,
         				where $\sigma$ is a $D$--symbol and 
         				$p,q,r,s,t,u,v,w \in \{0,1,C_0,C_1\}^*$. 
         				} 
         			\item{The $D$--symbols $D_0 ^A, D_1 ^A, D_0 ^B, D_1 ^B$ are allowed to be associated only with a matched pair of brackets $($ and $)$ of the first level.}
         			\item{Each pair of matched brackets 
         				$[$ and $]$ is associated with 
         			  	an $E$--symbol 
         				which is placed inside these brackets 
         				but not inside any other pair 
         				of matched brackets between them.
         				That is, 
         				the configuration of the subword between any 
         				two matched brackets $[$ and $]$ is 
         				$[p(\dots)q \dots r(\dots)s \,\sigma \,t(\dots)u\dots v(\dots)w]$,
         				where $\sigma$ is an $E$--symbol and 
         				$p,q,r,s,t,u,v,w \in \{0,1,C_0,C_1\}^*$. 
         				}
         			   
         			\item{Each pair of matched brackets is separated by at least two symbols.}
         			\item{The subwords $(0$, $0)$, $[0$ and $0]$ 
         				are not allowed.}
         			\item{The symbol $0$ is not allowed to be 
         				  the first or the last one of a word.}	   	
         			\item{Among the symbols $A_0$,$A_1$,$B_0$,$B_1$,$C_0$,$C_1$,$D_0^A$,$D_1^A$,$D_0^B$,$D_1^B$,$D_0 ^C$,$D_1 ^C$,$E_0^C$,$E_1^C$ each word of $L$ contains either exactly one occurrence of a $B$--symbol and no $A$--symbols and $C$--symbols, or  exactly one occurrence of an $A$--symbol and of a $C$--symbol, and no $B$--symbols.} 
         		        			
         		\end{itemize}

         		 %

         		It can be seen that $L$ is recognizable by a deterministic pushdown automaton. 
         		The right multiplication by $h$ either interchanges 
         		$C_0$ and $C_1$, $D_0 ^B$ and $D_1 ^B$, $D_0 ^C$ and $D_1 ^C$, or $E_0 ^C$ and $E_1 ^C$.
         		Therefore, the language 
         		$L_h = \{u\otimes v  |  u, v \in L,  \psi (v) =  \psi (u) h  \}$ is clearly context--free.        
         		The right multiplication by $a$ (or, $b$) moves the 
         		lamplighter by one step to the right (or, up). It is can be verified that  
         		the languages 
         		$L_a = \{u \otimes v | u,v \in L, \psi (v) = \psi(u) a \}$ 
         		and 
         		$L_b = \{u \otimes v | u,v \in L, \psi (v) = \psi(u) b \}$ 
         		are context--free as well.             
         	
         	Let us prove the inequality 
         	$\frac{1}{3} |w| - \frac{1}{3}  \leqslant  |g|$. 
         	It is equivalent to 
         	$|w| \leqslant 3 |g| +1$.
            The representative of 
           	$e \in \mathbb{Z}_2 \wr F_2$ is the word $A_0$ 
           	of length $1$, so the inequality  holds for $g=e$. 
           	For every $g \in \mathbb{Z}_2 \wr F_2$ 
           	the lengths of the representatives for $g$ and $gh$ are the same. It can be seen that 
           	for every $g \in \mathbb{Z}_2 \wr F_2$ the lengths 
           	of the representatives for $g$ and $ga$ (or $gb$)     
           	differ by at most $3$. 
           	Therefore, the inequality $|w| \leqslant 3 |g| + 1$
           	holds for all $g \in \mathbb{Z}_2 \wr F_2$.

           	Let us prove by induction the inequality 
           	$|g| \leqslant  3 |w| - 2$.
           	For the elements of the subgroup $H$ 
           	the representatives are the same as in 
           	Theorem \ref{propL2cayleyautomatic}, so
            the inequality $|g| \leqslant  3 |w| - 2$ is satisfied 
            for them.           	
           	For the inductive step, we observe that a 
           	word $w$ that represents any element 
           	$g \in \mathbb{Z}_2 \wr F_2$ has the form 
           	$v_0 (w_1) v_1 (w_2) v_2 \dots v_{n-1} (w_n) v_n$, 
           	where the words $v_0, v_1, \dots, v_n$ do not 
           	contain brackets. 
           	We 
           	obtain that $|g| \leqslant \sum_{i=1}^n (3 |w_i| - 2) 
           	+ 3 (|v_0 \dots v_n| + n) - 2 \leqslant 3 |w| - 2$. Therefore, 
           	the inequality $|g| \leqslant  3 |w| - 2$
           	holds for all $g \in \mathbb{Z}_2 \wr F_2$. 
            It can be verified that both bounds 
            $ \frac{1}{3} |w| - \frac{1}{3}  \leqslant  |g| \leqslant  3 |w| - 2 $ are reached for an infinite number of elements $g \in \mathbb{Z}_2 \wr F_2$.
        \end{proof}  
       

     Theorem \ref{Z2wrF2automatic} can be generalized, using essentially the same technique, to the following result.        
     \begin{theorem}
     \label{Z2wrFnautomatic}   
      There exists a context--free Cayley automatic representation 
      $\psi : L \rightarrow \mathbb{Z}_2 \wr F_n$ 
      of the group $\mathbb{Z}_2 \wr F_n$, $n \geqslant 2$
      such that for every $g \in \mathbb{Z}_2 \wr F_n$
      and its representative $w = \psi^{-1} (g)$ the 
      inequalities 
      $
      \frac{1}{2n-1} |w| - \frac{1}{2n-1}  
      \leqslant |g| \leqslant 3 |w| - 2
      $  
      hold.	   
  \end{theorem}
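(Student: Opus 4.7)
The plan is to extend the construction of Theorem~\ref{Z2wrF2automatic} directly from $F_2$ to $F_n$, preserving the same overall recursive strategy but accommodating the larger number of axes in the Cayley tree of $F_n$. Fixing generators $a_1,\dots,a_n$ of $F_n$, I would designate $a_1$ as the root axis and scan the $a_1$-line through $e$ from the leftmost to the rightmost important vertex (those in $\mathrm{supp}\,f \cup \{z,e\}$), writing basic, $A$-, $B$- or $C$-symbols as in the $F_2$ case and a $D$-symbol at any vertex that has off-axis content. At each $D$-symbol, for each of the $n-1$ non-parent axes $a_j$ into which we must recurse, I would open a pair of brackets of type $j$ and recursively scan the $a_j$-line through that vertex, writing $E$-symbols at vertices that require further branching. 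This requires enlarging the alphabet of Theorem~\ref{Z2wrF2automatic} to include $n-1$ distinguishable pairs of round and square brackets together with appropriately labelled $D$- and $E$-symbols carrying axis indices.

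Once the construction is in place, verifying that $L$ is recognized by a deterministic pushdown automaton proceeds as in Theorem~\ref{Z2wrF2automatic}, with the context-free grammar for the bracket skeleton generalized to $n-1$ pairs per level; the multiplication relations $L_h$ and $L_{a_i}$ remain context-free by essentially the same argument, since $h$ only flips one bit marker and $a_i$ moves the lamplighter by one edge while leaving the bracket skeleton locally modified. The upper bound $|g| \leqslant 3|w| - 2$ then follows by the same induction on the bracket structure used for $F_2$: one decomposes $w = v_0(w_1)v_1 \cdots v_{k-1}(w_k)v_k$ with each $v_j$ bracket-free, the base case (elements lying in the subgroup $\mathbb{Z}_2 \wr \mathbb{Z}$ associated with a single axis) being handled by Theorem~\ref{propL2cayleyautomatic}.

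The lower bound is equivalent to $|w| \leqslant (2n-1)|g| + 1$, and I would prove it by induction on $|g|$, showing that right multiplication by any generator increases the length of the representative by at most $2n-1$. The base case is trivial: the identity is represented by $B_0$, a single symbol. The induction step is the main obstacle: when the lamplighter moves into a branch not currently represented, one must account for the new position symbol, the bracket pair opened to anchor that branch, the possible conversion of a neighbouring basic symbol into a $D$- or $E$-variant, and the axis-direction labels attached to these symbols. Enumerating the cases according to whether the move is along the current axis, into a new axis at a vertex already carrying a $D$- or $E$-symbol, or into a completely fresh branch, one checks that the increase is at most $2n-1$ in every case; this reduces to the tight bound $3$ when $n=2$, recovering Theorem~\ref{Z2wrF2automatic}. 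Finally, one exhibits explicit infinite families of elements showing that both bounds are attained.
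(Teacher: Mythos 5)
Your proposal is correct and follows essentially the same route as the paper, which in fact states Theorem~\ref{Z2wrFnautomatic} without any proof, remarking only that Theorem~\ref{Z2wrF2automatic} ``can be generalized, using essentially the same technique''; your axis-labelled bracket types, the recursive line-scanning, and the two inductions (on the bracket decomposition for the upper bound and on $|g|$ for the lower bound) are precisely the natural way to carry that generalization out. One small observation: if brackets are opened only for axes that actually carry content, a single generator move seems to add at most $3$ symbols to the representative, which would give the stronger constant $\frac{1}{3}$; since $2n-1 \geqslant 3$ for $n \geqslant 2$, this would only mean the stated bound is not tight, not that your argument fails.
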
	  
        
      Let $\psi_G: L_G \rightarrow G$ be a context--free Cayley 
      automatic representation of a group $G$
      with respect to a set of generators 
      $\{g_1, \dots, g_m\} \subset G$
      such that the following holds for some constant $N$:   
      for all $u,v \in L_G$ such that $\psi_G (u) g_j = \psi_G (v)$, 
      $j = 1,\dots,m$ the inequality  
      $||u|-|v|| \leqslant N$ holds.       
      We assume that the empty word $\varepsilon \notin L_G$.  
      Let $a_1,\dots,a_n$ be the generators of the free group $F_n$. 
      We consider the Cayley graph of $G \wr F_n$ 
      with respect to the generators $g_1,\dots, g_m,a_1,\dots,a_n$.
      Put $K_0 = |\psi_G^{-1} (e)|$, where $e \in G$ is the 
      identity.
      Put $K = \max\{ K_0 + 2 (n-1), N \}$.
      Theorem \ref{Z2wrFnautomatic} can be generalized to the following result  (cf. Theorem \ref{propGZautomatic}).

      \begin{theorem}
      \label{GwrFncontextfreeautomaticineq}    	       	   
        	There exists a context--free Cayley automatic representation 
        	$\psi: L \rightarrow G \wr F_n$ of the group $G \wr F_n$ such that for every 
        	$g \in G \wr F_n$ and its representative $w = \psi^{-1}(g)$ the inequality 
        	$\frac{1}{K} |w| - \frac{K_0}{K} \leqslant |g|$ holds.  
        	Suppose that the inequality 
        	$|g| \leqslant C |\psi_G^{-1}(g) |+ D$ 
        	holds for all $g \in G$, 
        	where $C>0$ and $D \geqslant 0$. 
        	Then  for every 
        	$g \in G \wr F_n$ and its representative 
        	$w$ the  inequality
        	$|g| \leqslant  (C + D + 2) |w| - 2$
        	holds.                  
      \end{theorem}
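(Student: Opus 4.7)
The plan is to combine the constructions of Theorem \ref{propGZautomatic} (encoding $G$--valued lamps over $\mathbb{Z}$) and Theorem \ref{Z2wrFnautomatic} (handling the tree structure of $F_n$). An element $(f,z) \in G \wr F_n$ is encoded by first producing the $\mathbb{Z}_2 \wr F_n$ skeleton from Theorem \ref{Z2wrFnautomatic}, and then replacing every basic, $D$-- or $E$--symbol at a vertex $s$ with the $L_G$--word $\psi_G^{-1}(f(s))$, with only its first letter decorated in the underlined or marked manner prescribed in Theorem \ref{propGZautomatic}. Since the empty word is not in $L_G$, a fresh copy of $\psi_G^{-1}(e)$ is inserted at the lamplighter position whenever $f(z)=e$, so that the $C$--marker always lands on some $L_G$--word.

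Next, I would show that $L$ is context--free and that the multiplication relations $L_{g_j}$ and $L_{a_i}$ are context--free. The bracket skeleton is the one of Theorem \ref{Z2wrFnautomatic}, which is context--free; the material between bracket/decorated symbols lies in $L_G$, which is also context--free; a pushdown automaton parses $L$ by tracking bracket depth while delegating the intra--word parsing to the parser of $L_G$. The relation $L_{g_j}$ affects only the $L_G$--word labelling the lamplighter position, so it is context--free by the context--free Cayley automaticity of $G$ together with the bounded--difference hypothesis $||u|-|v|| \leqslant N$. The relation $L_{a_i}$ moves the $C$--marker from one lamp to an adjacent one in the tree, possibly deleting the old lamp word if it was the dummy $\psi_G^{-1}(e)$ with no other reason to be displayed, and inserting a fresh $\psi_G^{-1}(e)$ at the new position together with the appropriate brackets and $D/E$--symbols; this local rewrite is handled by a pushdown automaton in exactly the same way as in the proofs of Theorems \ref{propGZautomatic} and \ref{Z2wrFnautomatic}.

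Finally, I would prove the two inequalities by induction on $|g|$. The base case is $g = e$, whose representative has length $K_0$. For the inductive step, multiplication by $g_j$ changes $|w|$ by at most $N$, while multiplication by $a_i$ changes $|w|$ by at most $K_0 + 2(n-1)$: the worst case is the creation of a brand--new lamplighter vertex, which costs $K_0$ symbols for a fresh $\psi_G^{-1}(e)$ plus at most $2(n-1)$ further symbols to install or remove one pair of brackets and reconcile the surrounding $D$-- and $E$--symbols on the other $n-1$ sibling branches. Hence $||w|-|w'|| \leqslant K$ for every generator step, yielding $|w| \leqslant K|g| + K_0$, i.e.\ the lower bound. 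For the upper bound, I would decompose $w = v_0(w_1)v_1\cdots v_{n-1}(w_n)v_n$ at the outermost matched brackets as in the proof of Theorem \ref{Z2wrF2automatic}; apply the inductive hypothesis to each $w_i$; and use the hypothesis $|h| \leqslant C|\psi_G^{-1}(h)| + D$ on each lamp word in $v_0\cdots v_n$, plus the cost of the lamplighter's walk along this path, to obtain $|g| \leqslant (C+D+2)|w| - 2$.

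The main obstacle will be verifying the sharp constant $K_0 + 2(n-1)$ in the $a_i$--step through a careful case analysis: whether $za_i$ already occurs in the bracket structure, whether the vacated vertex $z$ still needs to be displayed because $f(z) \neq e$ or because $e \in F_n$ lies beyond it on the path, and how many of the $n-1$ sibling branches require new bracket/decoration adjustments. A parallel but less delicate accounting is required in the upper--bound induction to show that the $(C+D+2)|w|-2$ estimates contributed by the $w_i$, the $v_j$, and the bracket symbols combine without introducing any additional additive loss.
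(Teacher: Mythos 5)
Your proposal takes essentially the same route the paper intends: the paper gives no explicit proof of Theorem~\ref{GwrFncontextfreeautomaticineq}, asserting only that it follows by combining the techniques of Theorems~\ref{propGZautomatic} and~\ref{Z2wrFnautomatic}, and your construction (grafting first-letter-decorated $L_G$--words onto the bracketed $F_n$ skeleton), your per-generator accounting that reproduces $K=\max\{K_0+2(n-1),N\}$, and your bracket-decomposition induction for the upper bound are precisely that combination. The case analysis you flag as the main obstacle is real, but it is exactly the verification the paper itself leaves to the reader.
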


    \section{The wreath product $\mathbb{Z}_2 \wr \mathbb{Z}^2$}
    \label{Z2wrZ2section}    
       We denote by $x$ and $y$ the standard generators of $\mathbb{Z}^2$, and 
       by $h$ the nontrivial element of $\mathbb{Z}_2$.    
       Let us consider the Cayley graph of the wreath product $\mathbb{Z}_2 \wr \mathbb{Z}^2$ with respect to the generators $x,y$ and $h$.
       Recall that 
       an element of $\mathbb{Z}_2 \wr \mathbb{Z}^2$ is a pair
       $(f,z)$, where $f$ is a function 
       $f: \mathbb{Z}^2 \rightarrow \mathbb{Z}_2$ that has finite 
       support and $z \in \mathbb{Z}^2$ is the position of the 
       lamplighter. 
       We have the following theorem. 
       \begin{theorem}
       	\label{Z2wrZ2indexautomaticprop}   
       	There exists an indexed Cayley automatic representation 
       	$\psi: L \rightarrow \mathbb{Z}_2 \wr \mathbb{Z}^2$
       	for the wreath product $\mathbb{Z}_2 \wr \mathbb{Z}^2$ 
       	such that $L$ is a regular language. 
       \end{theorem}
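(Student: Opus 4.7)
The plan is to lift the one-dimensional encoding from Theorem~\ref{propL2cayleyautomatic} to a row-by-row encoding of elements of $\mathbb{Z}_2 \wr \mathbb{Z}^2$. Concretely, for $(f,z) \in \mathbb{Z}_2 \wr \mathbb{Z}^2$ with $z = (z_x, z_y)$, let $\ell_y$ and $r_y$ be the minimum and maximum of $\{0, z_y\} \cup \{y \in \mathbb{Z} : f(x,y) = h \text{ for some } x \in \mathbb{Z}\}$, and represent $(f,z)$ as
\[
r_{\ell_y}\, \#\, r_{\ell_y+1}\, \#\, \dots\, \#\, r_{r_y},
\]
where each row $r_i$ encodes the horizontal slice $f(\cdot,i)$ using the one-dimensional lamplighter encoding from Theorem~\ref{propL2cayleyautomatic} (every row carries an $A$-marker at column $0$, and the row $i=z_y$ carries additionally a $C$-marker at column $z_x$, with a combined $B$-marker replacing them when $z_x=0$), and the row delimiter $\#$ comes in four flavors tagging whether the following row is the origin row, the lamplighter row, both, or neither. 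A finite automaton can directly verify well-formedness --- each row is a valid one-dimensional word, exactly one origin tag and one lamplighter tag appear, the outermost row and the outermost columns of each row are ``required'' --- so $L$ is regular.

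Next I would verify that each generator relation lies in the indexed class. The relation $L_h$ is in fact regular, since right-multiplication by $h$ merely toggles the lamp value attached to the unique lamplighter marker. For $x^{\pm 1}$ only the lamplighter row is modified: its $C$-marker shifts by one column, its length changes by at most one, and all other rows are preserved; the resulting bounded shift in the convolution can be tracked by a finite automaton with a one-symbol buffer, so $L_{x^{\pm 1}}$ is regular as well.

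The main obstacle is $L_{y^{\pm 1}}$. Multiplying by $y$ removes the $C$-marker from the row at height $z_y$ and installs a $C$-marker in the row at height $z_y+1$. When $z_y$ lies strictly below the top of the bounding box this is again a local bounded-shift modification of two adjacent rows. When $z_y = r_y$, however, a brand-new row of length $|z_x|+1$ must be appended at the top, whose $C$-marker is required to sit at the same column $z_x$ as the old lamplighter's marker; moreover the old lamplighter row may simultaneously shrink by up to $|z_x|$ symbols, since its trailing run of unlit cells preceding the $C$-marker disappears once the marker is removed. Since $z_x$ is unbounded, the convolution $w \otimes w'$ exhibits an unbounded shift whose magnitude must be certified to equal column data read much earlier in the word --- a constraint strictly beyond finite-state power. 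I would handle this by constructing a nested stack automaton that uses its outer stack to push one stack symbol per column as it scans from the $A$-marker to the $C$-marker inside the old lamplighter row, preserves this stack while reading through any identical intermediate material, and later pops the symbols one per column while scanning the freshly inserted top row, certifying that the new $C$-marker sits at column $z_x$; inner stacks bookkeep the one-symbol delimiter shifts in the two affected rows and the length shrinkage of the old lamplighter row. The symmetric construction recognises $L_{y^{-1}}$, so all generator relations are indexed and the proof is complete.
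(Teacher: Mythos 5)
Your encoding differs fundamentally from the paper's, and the difference is where the gap lies. The paper does not use a row-by-row encoding at all: it enumerates $\mathbb{Z}^2$ by a fixed spiral $t:\mathbb{N}\rightarrow\mathbb{Z}^2$ and lets the $k$th letter of every representative describe the fixed lattice point $t(k)$. As a result the two tracks of any convolution $w_1\otimes w_2$ never drift out of alignment: for the generators $x,y$ the tracks differ only at the old and new lamplighter positions (plus a tail of constrained shape), and the stack is needed only to certify that these two positions are $1$ or $4n+1$ letters apart, $n$ being the current ring of the spiral. Your row encoding destroys this alignment, and your treatment of $L_{y^{\pm1}}$ does not recover it.

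Concretely, two things go wrong. First, your case analysis is incorrect: when $z_y$ lies strictly below the top of the bounding box, multiplication by $y$ is \emph{not} a bounded-shift modification. In your encoding (inherited from Theorem~\ref{propL2cayleyautomatic}) the extent of a row depends on the lamplighter, so the old lamplighter row can shrink by up to $|z_x|$ columns and the row at height $z_y+1$ can grow by up to $|z_x|$ columns; both amounts are unbounded, so every row above the lamplighter is displaced in the convolution by an unbounded, data-dependent offset, and no finite automaton with a one-symbol buffer handles this. Second, and more seriously, the displaced material is \emph{arbitrary}: the automaton must verify that the lamp configurations of all higher rows (and of the trimmed part of the old lamplighter row, when the trimming occurs on the left) coincide on the two tracks after an unbounded shift. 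Your push/pop device only compares two \emph{distances} (the column of the old $C$-marker against the column of the new one); it says nothing about how to match unboundedly much arbitrary content across an unbounded offset, which is a queue-type constraint that a one-way stack discipline does not obviously provide, and ``inner stacks bookkeep the length shrinkage'' is not a construction. (Even the distance comparison as you describe it is off: the push interval, from the $A$-marker to the $C$-marker of the old row on track~1, can overlap the pop interval, from the $A$-marker to the $C$-marker of the new row on track~2, so the phases must be rearranged.) Until you either prove that this shifted-copy relation is indexed or change the encoding so that word positions have element-independent meanings --- which is exactly what the paper's spiral does --- the claim that $L_{y^{\pm1}}$ is indexed is unsupported, and this is the entire content of passing from $\mathbb{Z}_2\wr\mathbb{Z}$ to $\mathbb{Z}_2\wr\mathbb{Z}^2$.
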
 
       \begin{proof}          
       	\begin{figure}[b]
       		\centerline
       		{\includegraphics[height=6.0cm]{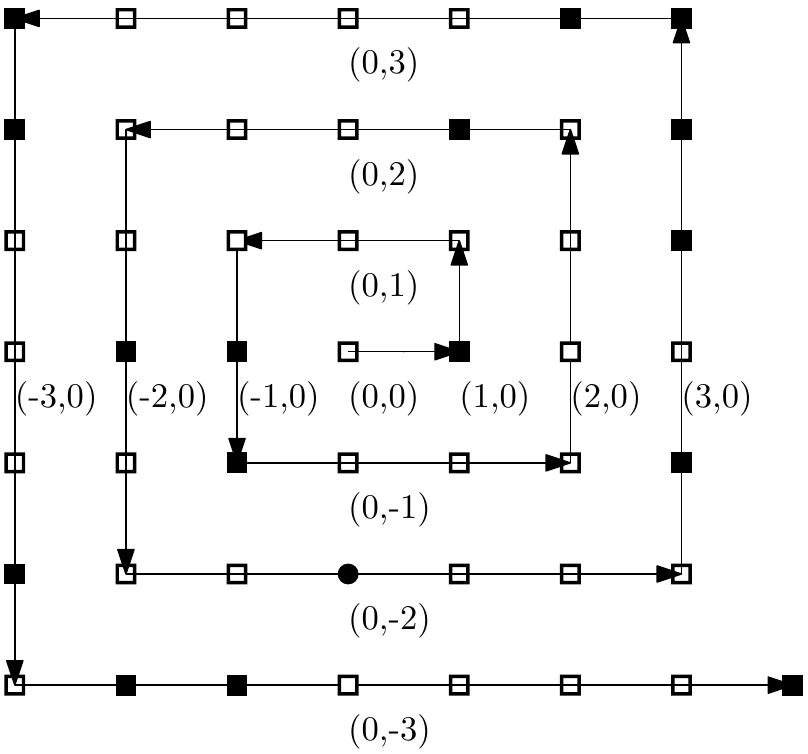}}
       		\vspace*{8pt}
       		\caption{A white box means that the value of a function 
       			$f: \mathbb{Z}^2 \rightarrow \mathbb{Z}_2$ is 
       			$e \in \mathbb{Z}_2$, 
       			a black box means that it is $h \in \mathbb{Z}_2$, 
       			a black disk at the point $(0,-2)$ specifies the position of the lamplighter  and tells us that the value of $f$ is $h$. 
       			For the element shown in this figure the word representing
       			it is $0100011000000100001000C_1000101111000011000101100001$. 
       		}
       		\label{z2wrz21fig}
       	\end{figure}     
       	Put $\Sigma = \{0,1,C_0,C_1\}$. 
       	Let us consider the map 
       	$t: \mathbb{N} \rightarrow \mathbb{Z} ^2$ such
       	that: $t(1) = (0,0)$, $t(2)=(1,0)$, $t(3) = (1,1)$, 
       	$t(4) = (0,1)$, $t(5)=(-1,1)$, $t(6)= (-1,0)$, $t(7)=(-1,-1)$,
       	$t(8)= (0,-1)$, and etc.; 
       	the map $t : \mathbb{N} \rightarrow \mathbb{Z} ^2$ is shown in Fig.~\ref{z2wrz21fig}.        
       	For a given element 
       	$(f,z) \in \mathbb{Z}_2 \wr \mathbb{Z}^2$,
       	 represent it as the word for which the $k$th 
       	symbol is $0$ if $f(t(k))=e$, $1$  if $f(t(k))=h$,
       	$C_0$ if $f(t(k))=e$ and $z = t(k)$, and 
       	$C_1$ if $f(t(k))=h$ and $z = t(k)$.
       	The last symbol of a word is not allowed to be $0$, i.e., 
       	it should be either $1$, $C_0$ or $C_1$.

       	It can be seen that the language $L \subset \Sigma^*$ of  representatives of all elements $g \in \mathbb{Z}_2 \wr \mathbb{Z}^2$
       	is regular. 
       	Also, the language $L_h = \{ w_1 \otimes w_2 | w_1, w_2 \in L, 
       	\psi (w_1) h  =  \psi (w_2) \}$
       	is regular.       
       	Let us consider the languages 
       	$L_x = \{ w_1 \otimes w_2 | w_1, w_2 \in L, 
       	\psi (w_1) x = \psi (w_2)  \} $ and 
       	$L_y = \{ w_1 \otimes w_2 | w_1, w_2 \in L, 
       	\psi (w_1) y = \psi (w_2) \}$.
       	We will show that $L_x$ and $L_y$ are indexed languages.         	       	       	            	       		       			      		
       		Put the stack alphabet  $\Xi = \{ I, B, T \}$. The symbols $B$ and $T$ denote 
       		the bottom and the top of the stack, respectively. 
       		The symbol $I$ is used for all intermediate positions.   
       		Let $w \in L_x$. Consider the stack automaton 
       		$\mathcal{M}_x$ that
       		works as follows until it meets for the first time the 
       		letter that contains $C_0$ or $C_1$.
       		
       		\begin{figure}[b]
       			\centerline
       			{\includegraphics[height=5.5cm]{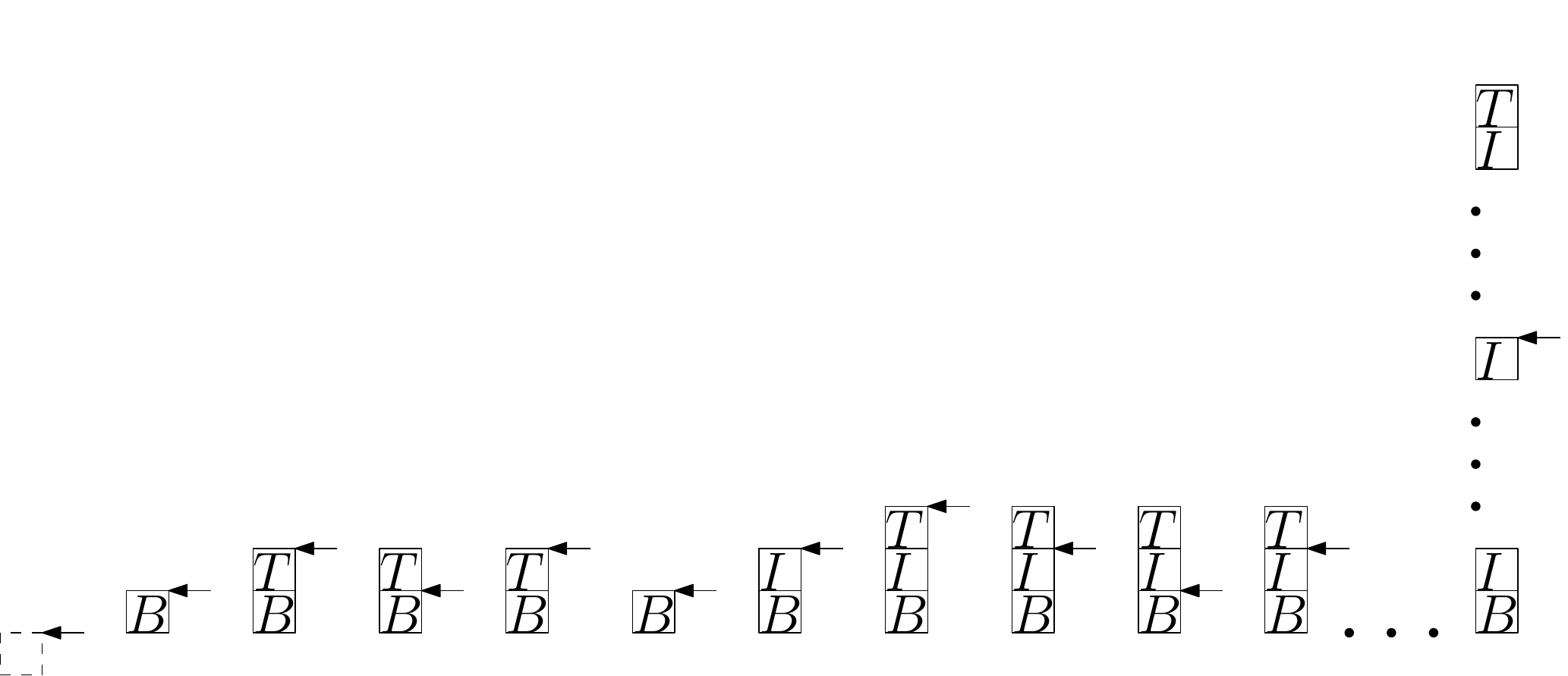}}
       			\vspace*{8pt}
       			\caption{The content of the stack is shown for the first  
       				several  iterations of $\mathcal{M}_x$. 
       				In general situation, the content of the stack is
       				shown to the right.}
       			\label{stackautomaton1} 
       		\end{figure} 	
       		
       		\begin{itemize}
       			\item{Initially the stack is empty.}  	
       			\item{$\mathcal{M}_x$ reads off the first letter of $w$ and
       				pushes  $B$ onto the stack.}	
       			\item{$\mathcal{M}_x$ reads off the second letter of $w$ and 
       				pushes  $T$ onto the stack.}
       			\item{$\mathcal{M}_x$ reads off the third letter of $w$ and 
       				goes one step down.} 
       			\item{$\mathcal{M}_x$ reads off the fourth letter of $w$ 
       				and goes one step up.}
       			\item{$\mathcal{M}_x$ makes two silent moves popping 
       				 $T$ and pushing $I$.}
       			\item{$\mathcal{M}_x$ reads off the fifth letter of $w$ 
       				and pushes $T$ onto the stack.}	
       			\item{$\mathcal{M}_x$ reads off the sixth letter of $w$ 
       				and goes one step down.}
       			\item{$\mathcal{M}_x$ reads off the seventh letter of $w$ 
       				and goes one step down.}
       			\item{$\mathcal{M}_x$ reads off the eights letter of $w$
       				and goes one step up.}
       			\item{The process continues by going up and down along the
       				stack between  $B$ and 
       				 $T$. Each time the top  is reached, it is raised up by one. 
       				This process is shown in Fig.~\ref{stackautomaton1}. 
       			     }              		 
       		\end{itemize}	
       	    
       		It is easy to see the following. 
       		If a letter being read of a word $w$ at a position $m$ contains $C_0$ or $C_1$ for the first time, then either the next letter at the position $m+1$ contains $C_0$ or $C_1$, or 
       		the letter at the position $m+ (4n+1)$ contains 
       		$C_0$ or $C_1$, where $n$ is the current height of the stack. In order to verify the latter case, the automaton 
       		$\mathcal{M}_x$, after meeting the letter containing  
       		$C_0$ or $C_1$ 
       		for the first time,
       		works as follows.
       		\begin{itemize}
       			\item{$\mathcal{M}_x$ makes silent moves going up until it reaches 
       				the top of the stack.} 
       			\item{Then the automaton $\mathcal{M}_x$ pops a symbol out of stack each time it reads off four consecutive letters.}
       			\item{After the stack is emptied the automaton reads off
       				  the next letter which should contain $C_0$ or $C_1$.}   
       			\item{After meeting the letter containing 
       			   	$C_0$ or $C_1$ for the second time, the automaton keeps working without using the stack.}
       		\end{itemize}       		
       		It is easy to see that the automaton $\mathcal{M}_x$ 
       		recognizes the language $L_x$. 
       		In  a similar way one can obtain the nested stack automaton 
       		$\mathcal{M}_y$ that recognizes the language $L_y$.    
       	
       	\end{proof}        	
  
       	
       	\begin{remark}      
       		For a given $g \in \mathbb{Z}_2 \wr \mathbb{Z}^2$,
       		let $w$ be the representative of $g$ for the 
       		indexed Cayley automatic representations of 
       		$\mathbb{Z}_2 \wr \mathbb{Z}^2$ constructed in  
       		Theorem \ref{Z2wrZ2indexautomaticprop}. Then the 
       		inequality $ 
       		|g| \leqslant 2 |w| - 1
       		$ holds.
       		However, no inequality of the form        	
       	    $\lambda |w| + \mu \leqslant |g|$    		
       		is satisfied for all $g \in \mathbb{Z}_2 \wr \mathbb{Z}^2$.  
       	\end{remark}


\begin{thebibliography}{10}
		
		\bibitem{baumslag61}
		G.~Baumslag, Wreath products and finitely presented groups, {\em Mathematische
			Zeitschrift} {\bf 75}(1)  (1961)  22--28.
		
		\bibitem{baumslagshapiro}
		G.~Baumslag, M.~Shapiro and H.~Short, Parallel poly--pushdown groups, {\em
			Journal of Pure and Applied Algebra} {\bf 140}(3)  (1999)  209--227.
		
		\bibitem{dlt14}
		D.~Berdinsky and B.~Khoussainov, On automatic transitive graphs, {\em Lecture
			Notes in Computer Science, A.M. Shur and M.V. Volkov (Eds.): Developments in
			Language Theory 2014} {\bf 8633}  (2014)  1--12.
		
		\bibitem{Blumensath99}
		A.~Blumensath, {\em Automatic Structures} (Diploma {T}hesis, {RWTH}, 1999).
		
		\bibitem{Taback03}
		S.~Cleary and J.~Taback, Dead end words in lamplighter groups and other wreath
		products, {\em The Quarterly Journal of Mathematics} {\bf 56}(2)  (2005)
		165--178.
		
		\bibitem{Dybina99Z2}
		A.~Dyubina, An example of the rate of growth for a random walk on a group, {\em
			Russian Mathematical Surveys} {\bf 54}(5)  (1999)  1023--1024.
		
		\bibitem{ElderTabackCgraph}
		M.~Elder and J.~Taback, $\mathcal{C}$--graph automatic groups, {\em Journal of
			Algebra} {\bf 413}  (2014)  289--319.
		
		\bibitem{ElderTabackF15}
		M.~Elder and J.~Taback, Thompson's group {$F$} is $1$--counter graph automatic,
		{\em arXiv:1501.04313 [math.GR]}   (2015).
		
		\bibitem{Epsteinbook}
		D.~B.~A. Epstein, J.~W. Cannon, D.~F. Holt, S.~V.~F. Levy, M.~S. Paterson and
		W.~P. Thurston, {\em Word Processing in Groups} (Jones and Barlett
		Publishers. Boston, MA, 1992).
		
		\bibitem{dyubinaasymptotics2004}
		A.~Erschler, On the {A}symptotics of {D}rift, {\em Journal of Mathematical
			Sciences} {\bf 121}(3)  (2004)  2437--2440.
		
		\bibitem{Erschlerisoperimetric03}
		A.~Erschler, On {I}soperimetric {P}rofiles of {F}initely {G}enerated {G}roups,
		{\em Geometriae Dedicata} {\bf 100}(1)  (2003)  157--171.
		
		\bibitem{Gilman05}
		R.~Gilman, Formal languages and their application to combinatorial group
		theory, {\em Groups, Languages, Algorithms\/},  ed. A.~V. Borovik, {\em
			Contemporary Mathematics} {\bf 378} (American Mathematical Society, 2005),
		pp. 1--36.
		
		\bibitem{Gilman98}
		R.~Gilman and M.~Shapiro, On groups whose word problem is solved by a nested
		stack automaton, {\em arXiv:math/9812028 [math.GR]}   (1998).
		
		\bibitem{HopcroftUllman}
		J.~Hopcroft, R.~Motwani and J.~D. Ullman, {\em Introduction to Automata Theory,
			Languages, and Computation} (Addison--Wesley, 1979).
		
		\bibitem{KargapolovMerzljakov}
		M.~I. Kargapolov and J.~I. Merzljakov, {\em Fundamentals of the theory of
			groups} (Springer--Verlag New York Inc., 1979).
		
		\bibitem{KKM11}
		O.~Kharlampovich, B.~Khoussainov and A.~Miasnikov, From automatic structures to
		automatic groups, {\em Groups, Geometry, and Dynamics} {\bf 8}(1)  (2014)
		157--198.
		
		\bibitem{KhoussainovNerode95}
		B.~Khoussainov and A.~Nerode, Automatic presentations of structures, {\em Logic
			and Computational Complexity\/},  ed. D.~Leivant, {\em Lecture Notes in
			Computer Science} {\bf 960} (Springer Berlin Heidelberg, 1995), pp. 367--392.
		
		\bibitem{Parry92}
		W.~Parry, Growth series of some wreath products, {\em Transactions of the
			American Mathematical Society} {\bf 331}(2)  (1992)  751--759.
		
	\end{thebibliography}

	\end{document}